\newtheorem{theorem}{Theorem}[section]
\newtheorem{lemma}[theorem]{Lemma}
\newtheorem{proposition}[theorem]{Proposition}
\newtheorem{corollary}[theorem]{Corollary}
\theoremstyle{definition}
\newtheorem{remark}[theorem]{Remark}
\numberwithin{equation}{section}
\begin{document}

\title[Subalgebras of vector fields on the plane]{On Lie's
classification of subalgebras of vector fields on the plane}

\author[H. Azad]{Hassan Azad}

\address{Abdus Salam School of Mathematical Sciences, GC University Lahore, 68-B, New Muslim
Town, Lahore 54600, Pakistan}

\email{hassan.azad@sms.edu.pk, hssnazad@gmail.com}

\author[I. Biswas]{Indranil Biswas}

\address{School of Mathematics, Tata Institute of Fundamental
Research, Homi Bhabha Road, Mumbai 400005, India}

\email{indranil@math.tifr.res.in}

\author[F. M. Mahomed]{Fazal M. Mahomed}

\address{DST-NRF Centre of Excellence in Mathematical and Statistical Sciences,
School of Computer Science and Applied Mathematics,
University of the Witwatersrand, Johannesburg Wits 2050, South Africa and\\
School of Mathematics and Statistics, The University of New South Wales, Sydney
NSW 2052 Australia}

\email{Fazal.Mahomed@wits.ac.za}

\author[S. W. Shah]{Said Waqas Shah}

\address{Abdus Salam School of Mathematical Sciences, GC University Lahore, 68-B, New Muslim
Town, Lahore 54600, Pakistan}

\email{waqas.shah@sms.edu.pk}

\thanks{FM is Visiting Professor at UNSW for 2020}

\subjclass[2010]{17B66, 32M25, 17B30}

\keywords{Lie algebra of vector fields, Representation of Lie algebras, ODE, PDE}

\date{}

\begin{abstract}
A brief proof of Lie's classification of solvable algebras of vector fields on the plane is given.
The proof uses basic representation theory and PDEs. 
\end{abstract}

\maketitle

\section{Introduction}\label{se1}

The aim of this note is to give a brief proof of Lie's classification of all solvable Lie algebras of vector fields on
the plane. The main results are given as Theorem \ref{prop2}, Corollary \ref{cor6.4},
Proposition \ref{pro3} and Proposition \ref{propa}. A sketch of the arguments is given in
Section \ref{se2}. The original proofs in \cite{Li1}, \cite{Li2} classify all algebras of vector fields in
$\mathbb{C}^{2}$ in terms of transitive, primitive and imprimitive algebras.
Later this was extended by the authors of \cite{GKO} to vector fields in ${\mathbb R}^2$.
We will first classify nilpotent
nonabelian algebras and then solvable algebras that are not nilpotent, of vector fields defined on open subsets of
${\mathbb C}^2$, up to locally defined $C^{\infty}$ change of variables; see \cite[p.~3--4]{Li2} for further
explanation on this.

For solvable algebras which are not nilpotent we first work over
${\mathbb C}$ and derive the results over ${\mathbb R}$ as
consequences of the classification over ${\mathbb C}$.

In \cite{SMAM} a complete
classification of all 4th-order equations, according to the structure of their symmetry algebras, is given.
The main tool used there is Lie's classification. Moreover, as the Borel algebra of a semisimple Lie
algebra essentially determines the full Lie algebra, a knowledge of a detailed list of solvable algebras is useful in
determining the corresponding semisimple algebras. For this reason, it is desirable to look at Lie's classification
from different points of view.

\section{Sketch of the arguments}\label{se2}

For the sake of convenience, we call a Lie algebra of vector fields defined on some open subset of
${\mathbb C}^2$ as a Lie algebra of vector fields on ${\mathbb C}^2$. The vector fields
$\frac{\partial}{\partial x}$ and $\frac{\partial}{\partial y}$ will be denoted by ${\partial}_x$
and ${\partial}_y$, respectively.

In this paper, by the {\it rank} of a Lie algebra of vector fields we mean the dimension of its generic orbit (this
should not be confused with the
dimension of its Cartan subalgebra). It should be mentioned that
for semisimple Lie algebras of vector fields, the rank and dimension of their
Cartan subalgebras coincide (cf. \cite{ABM}).

We first classify nilpotent, nonabelian algebras of $C^{\infty}$ vector fields in the (real or complex) plane up to 
locally defined change of variables --- the abelian case being easy. Up to point transformations, there is only one and 
its center is of dimension one while the algebra is of rank two.

Any solvable Lie algebra $\mathfrak{g}$ of $C^\infty$ vector fields in the plane,
whose commutator is non-abelian, operates on the one dimensional center of
$\mathfrak{g}'\,:=\, [\mathfrak{g},\,\mathfrak{g}]$. This determines the possible forms of representatives of
$\mathfrak{g}/\mathfrak{g}'$. Their
representatives are brought to their simplest form by solving some elementary partial differential equations.

When $\mathfrak{g}'$ is of rank two, $\mathfrak{g}$ is contained
in its normalizer, which consists of linear fields and the classification in this case is straightforward.

When the commutator of $\mathfrak{g}$ is abelian and it is of rank one, we need to first assume that $\mathfrak{g}$ is
defined over ${\mathbb C}$ so that Lie's theorem --- on solvable algebras fixing a line in any representation --- can
be used. This determines the form of representatives of $\mathfrak{g}/\mathfrak{g}'$ and the representatives are
brought to their simplest form by solving elementary partial differential equations. It turns out that
$\mathfrak{g}/\mathfrak{g}'$ is of dimension at most two. There is only one Jordan block for each eigenvalue of
$\mathfrak{g}/\mathfrak{g}'$
on $\mathfrak{g}'$. If now $\mathfrak{g}$ is a real algebra and $\mathfrak{g}^{\mathbb C}$ its complexification, then as
$(\mathfrak{g}/\mathfrak{g}')^{\mathbb C} \,=\, \mathfrak{g}^{\mathbb C}/(\mathfrak{g}')^{\mathbb C}$,
it follows that $\mathfrak{g}/ \mathfrak{g}'$ is also of dimension
at most two.

The real case of Theorem \ref{prop6.5} follows by applying Lie's Theorem
on real solvable algebras; this is explained in Remark \ref{re6.4}.

\section{Preliminary lemmas}

Although the field in this section is $\mathbb C$, it can be replaced by $\mathbb R$.

We will consider $C^\infty$ vector fields defined on an open subset of $\mathbb C$. Let $V({\mathbb C})$ denote the
space of all such vector fields.

All vector fields considered here are $C^\infty$.

\begin{lemma}\label{lem1}
Let $\mathfrak g$ be a Lie algebra of vector fields on
an open subset of ${\mathbb C}^2$ such that every
element of $\mathfrak g$ is of the form $\xi(x,y)\partial_x+\eta(y)\partial_y$ and
$$
\Pi\, :\, {\mathfrak g}\, \longrightarrow\, V({\mathbb C})
$$
be the map defined by $\xi(x,y)\partial_x+\eta(y)\partial_y\, \longrightarrow\, \eta(y)\frac{d}{dy}$.
Then $\Pi$ is a homomorphism of Lie algebras.

Moreover, if $\mathfrak g$ is finite dimensional and solvable, then
$\Pi(\mathfrak g)$, after a change of variables involving only $y$, is
a subalgebra of the algebra
\begin{equation}\label{h1}
\{(a+by)\frac{d}{dy}\, \in\, V({\mathbb C})\, \mid\, a,\, b\, \in\,\mathbb C\}\, .
\end{equation}
\end{lemma}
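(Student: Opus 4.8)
The plan is to treat the two assertions separately. For the homomorphism statement I would argue by a direct bracket computation. Writing two typical elements as $X_i \,=\, \xi_i(x,y)\partial_x + \eta_i(y)\partial_y$ for $i=1,2$ and applying the standard formula for the Lie bracket of vector fields, the coefficient of $\partial_y$ in $[X_1,X_2]$ equals $X_1(\eta_2) - X_2(\eta_1)$. Since each $\eta_i$ depends on $y$ alone, the $\partial_x$-derivatives drop out and this coefficient reduces to the Wronskian $\eta_1\eta_2' - \eta_2\eta_1'$, a function of $y$ only. Hence $[X_1,X_2]$ is again of the prescribed form, and $\Pi([X_1,X_2]) \,=\, (\eta_1\eta_2'-\eta_2\eta_1')\frac{d}{dy}$, which is exactly $[\Pi(X_1),\Pi(X_2)]$ computed in $V(\mathbb C)$. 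As $\Pi$ is visibly linear, this proves it is a homomorphism.

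For the second assertion, set $\mathfrak h \,=\, \Pi(\mathfrak g)$; being a homomorphic image of the solvable $\mathfrak g$, it is a finite dimensional solvable subalgebra of $V(\mathbb C)$. If $\mathfrak h = 0$ there is nothing to prove, so I would fix a nonzero $\eta_0\frac{d}{dy}\in\mathfrak h$ and restrict to an open subset on which $\eta_0$ does not vanish. There the substitution $u \,=\, \int \frac{dy}{\eta_0(y)}$ --- a change of variable involving only $y$, and the sole PDE input of the argument --- rectifies this field to $\frac{d}{du}$; renaming $u$ as $y$, I may assume $\frac{d}{dy}\in\mathfrak h$. Now consider $W \,=\, \{\eta \,:\, \eta\frac{d}{dy}\in\mathfrak h\}$, a finite dimensional space of functions. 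Bracketing with $\frac{d}{dy}$ gives $[\frac{d}{dy},\eta\frac{d}{dy}] = \eta'\frac{d}{dy}$, so $W$ is stable under differentiation, and bracketing two general elements shows $W$ is closed under the Wronskian $(\eta_1,\eta_2)\mapsto \eta_1\eta_2'-\eta_2\eta_1'$.

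A finite dimensional space of functions stable under $\frac{d}{dy}$ is spanned by quasi-polynomials $y^k e^{\lambda y}$, so I would decompose $W$ into generalized eigenspaces $W_\lambda$ of $\frac{d}{dy}$. The heart of the matter, and the step I expect to be the main obstacle, is to show that finite dimensionality together with Wronskian closure forces these eigenspaces to be tiny: computing Wronskians of quasi-polynomials shows that a nonzero eigenvalue $\lambda$ forces $\dim W_\lambda = 1$ and $W_0 = \langle 1\rangle$, while in the purely nilpotent case one must have $W_0 \subseteq \langle 1, y, y^2\rangle$ --- otherwise the Wronskian produces an unbounded tower of new eigenvalues or monomials (e.g. $e^{\lambda y}, e^{2\lambda y}, e^{3\lambda y},\ldots$, or degrees exceeding any bound), contradicting $\dim W < \infty$. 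This leaves only finitely many shapes for $W$, each of dimension at most three.

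Finally I would invoke solvability to discard the non-solvable shapes, namely $\langle 1,y,y^2\rangle\frac{d}{dy}$ and the symmetric $\langle 1, e^{\lambda y}, e^{-\lambda y}\rangle\frac{d}{dy}$, both isomorphic to $\mathfrak{sl}_2(\mathbb C)$. The surviving possibilities are at most two dimensional, and each is carried into a subalgebra of the affine algebra $\{(a+by)\frac{d}{dy}\}$ by a further change of variable in $y$ alone --- a M\"obius substitution in the polynomial cases, or $u = e^{-\lambda y}$ in the exponential case $\langle 1, e^{\lambda y}\rangle\frac{d}{dy}$. Alternatively, once $\mathfrak h$ is known (by this analysis or by the classical result) to embed, after a coordinate change, into the standard copy $\mathfrak{sl}_2(\mathbb C) = \langle \frac{d}{dy}, y\frac{d}{dy}, y^2\frac{d}{dy}\rangle$, Lie's theorem places the solvable $\mathfrak h$ inside a Borel subalgebra, and all Borel subalgebras are $\mathrm{PSL}_2$-conjugate to the affine algebra; this yields the conclusion with less casework.
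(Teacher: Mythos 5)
Your proof is correct, but it takes a genuinely different and considerably longer route than the paper's. The paper disposes of the second assertion with two lines of structure theory: since $\Pi(\mathfrak g)$ is solvable, its commutator is a nilpotent subalgebra of $V({\mathbb C})$; a nonzero nilpotent subalgebra of $V({\mathbb C})$ has a nonzero center, a central element rectifies to $\frac{d}{dy}$, and the centralizer of $\frac{d}{dy}$ consists only of constant fields, so the commutator collapses to $\langle\frac{d}{dy}\rangle$; finally $\Pi(\mathfrak g)$ normalizes its own commutator, and the normalizer of $\langle\frac{d}{dy}\rangle$ in $V({\mathbb C})$ is exactly the affine algebra \eqref{h1} (the abelian case being the trivial one-dimensional rectification). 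You instead rectify an arbitrary nonzero element, pass to the coefficient space $W$, and carry out the full quasi-polynomial/Wronskian analysis that classifies \emph{all} finite-dimensional subalgebras of $V({\mathbb C})$ containing $\frac{d}{dy}$ --- recovering Lie's theorem that they are at most three-dimensional and conjugate into $\langle \frac{d}{dy},\, y\frac{d}{dy},\, y^2\frac{d}{dy}\rangle$ --- and only then invoke solvability to discard the two $\mathfrak{sl}_2(\mathbb C)$ shapes. Your bookkeeping is sound: a nonzero eigenvalue $\lambda$ does force $\dim W_\lambda=1$ and $W_0=\langle 1\rangle$, the purely polynomial case is capped at degree $2$, and the $D$-stability of $W$ legitimately excludes ``skew'' subspaces, so your case list is exhaustive. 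What your approach buys is a self-contained proof of the stronger classification of subalgebras of $V({\mathbb C})$; what the paper's buys is brevity, by injecting solvability at the outset (through nilpotency of the commutator and the normalizer computation) rather than at the very end.
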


\begin{proof}
It is evident that $\Pi$ is a homomorphism of Lie algebras.

The commutator of a finite dimensional nonabelian solvable Lie algebra is nilpotent and has a
nonzero center. The second part follows from this and the fact that the normalizer of
$\langle \frac{d}{dy}\rangle \, \subset\, V({\mathbb C})$ is the two dimensional
subalgebra in \eqref{h1}.
\end{proof}

\begin{lemma}\label{lem2}
Let $\mathfrak g$ be a finite dimensional abelian Lie algebra of vector fields on
an open subset of ${\mathbb C}^2$. Then one of the following two holds:
\begin{enumerate}
\item ${\rm rank}({\mathfrak g})\,=\, 1$.

\item ${\rm rank}({\mathfrak g})\,=\, 2$ and ${\mathfrak g}\,=\, \langle \partial_x,\, \partial_y\rangle$
up to point transformations.
\end{enumerate}
\end{lemma}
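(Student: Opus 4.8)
The plan is to dispose of the rank one alternative immediately and concentrate all the work on the rank two case. Since the vector fields live on an open subset of $\mathbb{C}^2$, the dimension of the generic orbit is at most two, so (assuming $\mathfrak{g}\neq 0$, the zero algebra being trivial) the rank is either one or two. Case (1) of the statement merely records the rank one alternative and requires nothing further; thus I would assume ${\rm rank}({\mathfrak g})\,=\,2$ and aim to produce coordinates in which ${\mathfrak g}\,=\,\langle \partial_x,\,\partial_y\rangle$.

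First I would fix a generic point $p$ and choose two elements $X_1,\,X_2\,\in\,{\mathfrak g}$ whose values $X_1(p),\,X_2(p)$ are linearly independent; such a pair exists precisely because the generic orbit is two dimensional. As ${\mathfrak g}$ is abelian we have $[X_1,\,X_2]\,=\,0$, and both fields are nonvanishing on a neighbourhood of $p$.

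The key step is the simultaneous straightening of two commuting, pointwise independent vector fields: by the standard theorem on commuting flows there is a local change of coordinates near $p$ carrying $X_1$ to $\partial_x$ and $X_2$ to $\partial_y$. This is the one genuinely analytic input, and I expect it to be the main obstacle in the sense that everything else is purely formal; it rests on the fact that the local flows of $X_1$ and $X_2$ commute exactly because their bracket vanishes, so that these flows can be used as coordinate lines.

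Finally, working in these coordinates, I would take an arbitrary $X\,=\,a(x,y)\partial_x+b(x,y)\partial_y\,\in\,{\mathfrak g}$ and exploit abelianness: the relations $[\partial_x,\,X]\,=\,(\partial_x a)\partial_x+(\partial_x b)\partial_y\,=\,0$ and $[\partial_y,\,X]\,=\,(\partial_y a)\partial_x+(\partial_y b)\partial_y\,=\,0$ force all first partial derivatives of $a$ and $b$ to vanish, whence $a$ and $b$ are constants. Therefore ${\mathfrak g}\,\subseteq\,\langle \partial_x,\,\partial_y\rangle$, and since ${\mathfrak g}$ already contains the two independent fields $X_1\,=\,\partial_x$ and $X_2\,=\,\partial_y$, the equality ${\mathfrak g}\,=\,\langle \partial_x,\,\partial_y\rangle$ follows, establishing case (2).
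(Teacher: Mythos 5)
Your proposal is correct and follows essentially the same route as the paper: the paper's one-line proof also rests on straightening two commuting, pointwise independent fields to $\langle \partial_x,\,\partial_y\rangle$ and then invoking abelianness to force equality. You have merely spelled out the details (the commuting-flows theorem and the constancy of the coefficients) that the paper leaves implicit.
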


\begin{proof}
If ${\rm rank}({\mathfrak g})\,=\, 2$, then $\mathfrak g$ has a subalgebra equivalent to
$\langle \partial_x,\, \partial_y\rangle$ and thus
${\mathfrak g}\,=\, \langle \partial_x,\, \partial_y\rangle$. From this it follows that
${\rm rank}({\mathfrak g})\,\leq\, 2$.
\end{proof}

\section{Nilpotent non-abelian subalgebras}

In this section $\mathbb C$ can be replaced by $\mathbb R$.

\begin{proposition}\label{prop1}
Let $\mathfrak g$ be a nilpotent non-abelian Lie algebra of vector fields on ${\mathbb C}^2$.
Then the dimension of the center ${\mathfrak z}({\mathfrak g})$ of $\mathfrak g$ is one.
\end{proposition}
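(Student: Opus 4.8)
The plan is to locate the center using nilpotency and then pin down its dimension by combining the rank dichotomy of Lemma \ref{lem2} with the non-abelianness of $\mathfrak{g}$. First I would recall that a nonzero nilpotent Lie algebra has nonzero center: the lower central series of $\mathfrak{g}$ terminates, and its last nonzero term is a nonzero subalgebra contained in $\mathfrak{z}(\mathfrak{g})$. Hence $\dim \mathfrak{z}(\mathfrak{g}) \geq 1$, and it remains only to exclude $\dim \mathfrak{z}(\mathfrak{g}) \geq 2$.

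Since $\mathfrak{z}(\mathfrak{g})$ is abelian, Lemma \ref{lem2} splits the argument according to its rank. If ${\rm rank}(\mathfrak{z}(\mathfrak{g})) = 2$, then up to a point transformation $\mathfrak{z}(\mathfrak{g}) = \langle \partial_x,\, \partial_y\rangle$. Every $W \in \mathfrak{g}$ must then commute with both $\partial_x$ and $\partial_y$, which forces the coefficients of $W$ to be constant, so $\mathfrak{g} \subseteq \langle \partial_x,\, \partial_y\rangle$; this contradicts the hypothesis that $\mathfrak{g}$ is non-abelian. Therefore ${\rm rank}(\mathfrak{z}(\mathfrak{g})) = 1$.

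In the rank-one case I would straighten a generic, hence locally nonvanishing, central vector field so that $\partial_x \in \mathfrak{z}(\mathfrak{g})$. Because $\partial_x$ is central, every $W \in \mathfrak{g}$ commutes with $\partial_x$ and so has the form $W = a(y)\partial_x + b(y)\partial_y$ with coefficients depending on $y$ alone; moreover, as the center has rank one and contains $\partial_x$, any central field is proportional to $\partial_x$ at generic points and thus equals $c(y)\partial_x$. Since $\mathfrak{g}$ is non-abelian it cannot consist entirely of fields $a(y)\partial_x$ (these mutually commute), so there is some $W = a(y)\partial_x + b(y)\partial_y \in \mathfrak{g}$ with $b \not\equiv 0$. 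Computing $[W,\, c(y)\partial_x] = b(y)c'(y)\partial_x$ and using that $c(y)\partial_x$ is central yields $b\,c' \equiv 0$, whence $c$ is constant. Thus $\mathfrak{z}(\mathfrak{g}) = \langle \partial_x\rangle$ is one dimensional.

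The step I expect to require the most care is the reduction to the normal form $W = a(y)\partial_x + b(y)\partial_y$: it hinges on choosing coordinates in which a generic central field becomes $\partial_x$, and then on reading off from ${\rm rank}(\mathfrak{z}(\mathfrak{g})) = 1$ that every remaining central field keeps the direction $\partial_x$. Once this reduction and the exclusion of the rank-two case are in place, the bracket computation forcing $c$ to be constant is routine.
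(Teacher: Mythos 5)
Your proof is correct and follows essentially the same route as the paper: exclude a rank-two center via the non-abelian hypothesis, straighten a central field to $\partial_x$, and use the bracket of a field with nonzero $\partial_y$-component against a central field $c(y)\partial_x$ to force $c$ constant. The paper merely organizes the last step contrapositively (normalizing a putative second central element to $y\partial_x$ and deducing that $\mathfrak g$ would be abelian), but the underlying computation is identical.
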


\begin{proof}
The Lie algebra $\mathfrak g$ has a nontrivial center. If the rank of the center is two, then
necessarily ${\mathfrak g}\,=\, \langle \partial_x,\, \partial_y\rangle$ and $\mathfrak g$ will
be abelian. But $\mathfrak g$ is non-abelian.

If the dimension of the center ${\mathfrak z}({\mathfrak g})$ of ${\mathfrak g}$
is two or more, then in suitable coordinates
$$
{\mathfrak z}({\mathfrak g})\, \supset\, \langle \partial_x,\, y\partial_x\rangle\, .
$$
Now every element of ${\mathfrak g}$ is of the form
$P(y)\partial_x+ Q(y)\partial_y$. Moreover,
$$
[y\partial_x,\, P(y)\partial_x+ Q(y)\partial_y]\,=\, -Q(y)\partial_y\,=\, 0\, .
$$
Thus, we have $Q(y)\,=\ 0$. Hence, all elements of $\mathfrak g$ are of the form
$P(y)\partial_x$. But this again implies that $\mathfrak g$ is abelian, contradicting
that $\mathfrak g$ is non-abelian.
\end{proof}

\begin{theorem}\label{prop2}
Let $\mathfrak g$ be a nilpotent non-abelian Lie algebra of vector fields on ${\mathbb C}^2$.
Then the canonical form of $\mathfrak g$ --- in suitable coordinates --- is
$$
\langle \partial_y\rangle + \langle \partial_x,\, y\partial_x ,\,\cdots,\, y^{N}\partial_x\rangle\,
$$ for $N \geq 1$.
\end{theorem}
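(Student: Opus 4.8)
The plan is to leverage the one-dimensional center supplied by Proposition \ref{prop1} together with the homomorphism $\Pi$ of Lemma \ref{lem1}, reducing the whole problem to describing a finite-dimensional space of polynomials stable under $\frac{d}{dy}$.

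First I would normalize the center. By Proposition \ref{prop1}, $\mathfrak z(\mathfrak g)$ is spanned by a single nonzero vector field $Z$; on the open set where $Z$ does not vanish it can be straightened, so after a change of coordinates $\partial_x \in \mathfrak z(\mathfrak g)$. Centrality then gives $[\partial_x,\, \xi\partial_x+\eta\partial_y] = \xi_x\partial_x+\eta_x\partial_y = 0$, forcing every element of $\mathfrak g$ into the form $\xi(y)\partial_x+\eta(y)\partial_y$, which is precisely the situation covered by Lemma \ref{lem1}.

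Next I would pin down the image of $\Pi$. Being a homomorphic image of a nilpotent algebra, $\Pi(\mathfrak g)$ is nilpotent, and Lemma \ref{lem1} places it (after a change of $y$) inside the two-dimensional affine algebra \eqref{h1}, whose nilpotent subalgebras have dimension at most one. Since $\Pi(\mathfrak g) = 0$ would make every element $\xi(y)\partial_x$ and hence $\mathfrak g$ abelian, we obtain $\dim\Pi(\mathfrak g) = 1$. Picking $W \in \mathfrak g$ with $\Pi(W)$ spanning the image and straightening this single field to $\frac{d}{dy}$ by a further change in $y$, then deleting the $\partial_x$-component of $W$ by a $y$-dependent shift of $x$ (which fixes $\partial_x$ and preserves the form $\xi(y)\partial_x+\eta(y)\partial_y$), I may assume $\partial_y \in \mathfrak g$. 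Writing $\mathfrak k := \{\xi(y)\partial_x \in \mathfrak g\} = \ker\Pi \cap \mathfrak g$, this is an abelian ideal and $\mathfrak g = \mathfrak k + \langle \partial_y\rangle$ as vector spaces.

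The heart of the argument, and the step I expect to demand the most care, is the identification of $\mathfrak k$. Identifying $\xi(y)\partial_x$ with the function $\xi$, the operator $\mathrm{ad}_{\partial_y}$ acts on the function space $\mathcal F := \{\xi \mid \xi(y)\partial_x \in \mathfrak g\}$ as $\frac{d}{dy}$, via $[\partial_y,\, \xi\partial_x] = \xi'\partial_x$. Nilpotency of $\mathfrak g$ makes this operator nilpotent, so a fixed power of $\frac{d}{dy}$ annihilates all of $\mathcal F$; hence $\mathcal F$ is a finite-dimensional, $\frac{d}{dy}$-invariant space of polynomials. Letting $n$ be the largest degree occurring and choosing $p \in \mathcal F$ with $\deg p = n$, the derivatives $p,\, p',\, \dots,\, p^{(n)}$ all lie in $\mathcal F$ and have the distinct degrees $n,\, n-1,\, \dots,\, 0$, so they span $\langle 1,\, y,\, \dots,\, y^n\rangle$; comparing dimensions forces $\mathcal F = \langle 1,\, y,\, \dots,\, y^n\rangle$ and thus $\mathfrak k = \langle \partial_x,\, y\partial_x,\, \dots,\, y^n\partial_x\rangle$. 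This yields $\mathfrak g = \langle \partial_y\rangle + \langle \partial_x,\, y\partial_x,\, \dots,\, y^n\partial_x\rangle$, and since $\mathfrak g$ is non-abelian the value $n=0$ (which gives the abelian $\langle \partial_x,\, \partial_y\rangle$) is excluded, so $N := n \ge 1$, as claimed.
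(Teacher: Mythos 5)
Your proposal is correct and follows essentially the same route as the paper's proof: straighten the one-dimensional center to $\langle\partial_x\rangle$, use Lemma \ref{lem1} and nilpotency to reduce the projection $\Pi(\mathfrak g)$ to $\langle\partial_y\rangle$ and normalize a preimage to $\partial_y$, then exploit nilpotency of $\mathrm{ad}\,\partial_y$ to identify the kernel with $\langle \partial_x,\, y\partial_x,\,\dots,\, y^N\partial_x\rangle$. The only differences are cosmetic (you straighten $\Pi(W)$ and then shift $x$, where the paper straightens the rank-two abelian subalgebra $\langle\partial_x, X\rangle$ in one step, and your degree-comparison argument for $\mathcal F$ is a slightly cleaner version of the paper's repeated application of $\mathrm{ad}\,\partial_y$).
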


\begin{proof}
From Proposition \ref{prop1} we know that $\dim {\mathfrak z}({\mathfrak g})\,=\, 1$. Let
${\mathfrak z}({\mathfrak g})\,=\langle \partial_x\rangle$ in suitable coordinates $x,\, y$.
There must be an element $X\, \in\, \mathfrak g$ of the form
$P(y)\partial_x+ Q(y)\partial_y$ with $Q\, \not=\, 0$, since otherwise $\mathfrak g$ will be
abelian. It is straightforward to check that
$$
\langle \partial_x,\, P(y)\partial_x+ Q(y)\partial_y\rangle\, \subset\, \mathfrak g
$$
is an abelian subalgebra of rank two. Therefore, there is a system of coordinates
$\widetilde{x},\, \widetilde{y}$ such that $\partial_x\,=\, \partial_{\widetilde{x}}$
and $X\,=\, \partial_{\widetilde{y}}$.

Writing $\widetilde{x},\, \widetilde{y}$ as $x,\, y$ for convenience we see that every element of $\mathfrak g$
is of the form $\xi(y)\partial_x+ \eta(y)\partial_y$ and ${\mathfrak g}\, \supset\,\langle \partial_x,\,
\partial_y\rangle$. By Lemma \ref{lem1}, the projection of $\mathfrak g$ on the $y$--axis is a nilpotent algebra
of vector fields on $\mathbb C$ and it is a non-zero algebra. Hence this projection is $\langle\partial_y\rangle$
in a change of coordinates involving only $y$. Thus the elements of $\mathfrak g$ are of the form
$\xi(y)\partial_x+ \lambda\partial_y$ and $\mathfrak g$ contains $\langle \partial_x,\, \partial_y\rangle$.

As ${\rm ad}\, \partial_y$ on $\mathfrak g$ is nilpotent, we conclude that for all elements in $\mathfrak g$ of the form
$\xi(y)\partial_x$, the function $\xi$ must be a polynomial. Take such an element $P(y)\partial_x$
of largest degree $N$, say
$$
P(y)\partial_x\,=\, (\sum_{i=0}^N a_i y^i)\partial_x\, ,
$$
with $a_N\, \not=\, 0$. Applying ${\rm ad}\, \partial_y$ repeatedly, we see that all the
$N$ vector fields $y^{N}\partial_x,\,\cdots,\, y\partial_x$ lie in $\mathfrak g$. Thus,
${\mathfrak g}\,=\, \langle \partial_y\rangle + \langle \partial_x,\, y\partial_x ,\,\cdots,\,
y^{N}\partial_x\rangle$
in suitable coordinates. It is straightforward to check that the center of it is
$\langle \partial_x\rangle$.
\end{proof}

\section{Solvable non-nilpotent subalgebras of vector fields}

Assume that $\mathfrak{g}$ is solvable but not nilpotent. Using Proposition
\ref{prop1} it is deduced that one of the following two holds:
\begin{enumerate}
\item ${\mathfrak g}'$ is non-abelian and the dimension of its center is one.

\item ${\mathfrak g}'$ is abelian.
\end{enumerate}

\begin{proposition}\label{pro3}
If ${\mathfrak g}$ is solvable and $\mathfrak{g}'$ is not abelian, then
$$
{\mathfrak g}'\,=\, \langle \partial_x,\, \partial_y,\, y\partial_x,\, \cdots,\,
y^k\partial_x\rangle, \ \ k \,\geq\, 1,
$$
and either
$$
\mathfrak{g}\, =\, \langle x \partial_{x},\, y \partial_{y} \rangle + \langle\partial_x,\,\partial_y,\,
y\partial_x,\, \cdots,\, y^{k} \partial_x \rangle, \ \ k \,\geq\, 1,
$$
or
$$
\mathfrak{g} \,=\, \langle a x\partial_x + y\partial_y \rangle + \langle\partial_x,\, \partial_y,\,
y \partial_x, \,\cdots, \, y^{k}\partial_x \rangle,\ \ a \,\neq\, 0,\, k\, .
$$
\end{proposition}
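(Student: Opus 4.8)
The plan is to reconstruct $\mathfrak g$ from its derived algebra. Since $\mathfrak g$ is solvable, $\mathfrak g'=[\mathfrak g,\mathfrak g]$ is nilpotent, and it is non-abelian by hypothesis; hence Theorem \ref{prop2} applies to $\mathfrak g'$ and, in suitable coordinates $x,y$, puts it in the form $\langle \partial_y\rangle+\langle \partial_x, y\partial_x,\ldots,y^k\partial_x\rangle$ with $k\geq 1$. This is the first assertion. For later use I would record that the center of $\mathfrak g'$ is $\langle \partial_x\rangle$ and that a direct bracket computation gives $[\mathfrak g',\mathfrak g']=\langle \partial_x,\ldots,y^{k-1}\partial_x\rangle$; thus $\partial_y$ and $y^k\partial_x$ are exactly the two generators of $\mathfrak g'$ that do not already lie in $[\mathfrak g',\mathfrak g']$, and these are the two elements that the outer part of $\mathfrak g$ will have to reproduce as commutators.

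Because $\mathfrak g'$ is an ideal, $\mathfrak g$ lies in the normalizer $N(\mathfrak g')$, and the next step is to compute this normalizer. Writing a general field as $\xi(x,y)\partial_x+\eta(x,y)\partial_y$ and demanding that its bracket with each of $\partial_x,\partial_y,y\partial_x,\ldots,y^k\partial_x$ land in $\mathfrak g'$ reduces --- after matching coefficients, the elementary partial differential equations promised in Section \ref{se2} --- to $\eta=\eta(y)$ affine and $\xi=cx+r(y)$ with $\deg r\leq k+1$. In particular the linear fields normalizing $\mathfrak g'$ are spanned by $x\partial_x,\,y\partial_x,\,y\partial_y$, so that modulo $\mathfrak g'$ they span the two-dimensional abelian algebra $\langle \overline{x\partial_x},\,\overline{y\partial_y}\rangle$. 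Reducing $\mathfrak g$ to this linear model is the heart of the matter: I would use the scalar action of $\mathfrak g$ on the one-dimensional center $\langle \partial_x\rangle$ together with the projection $\Pi$ of Lemma \ref{lem1}, whose image lies in $\langle \frac{d}{dy},\,y\frac{d}{dy}\rangle$, to show that a representative of each class in $\mathfrak g/\mathfrak g'$ may be taken linear and that $\dim \mathfrak g/\mathfrak g'\leq 2$.

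It then remains to impose the defining condition $[\mathfrak g,\mathfrak g]=\mathfrak g'$. By the first paragraph this is the requirement that both $\partial_y$ and $y^k\partial_x$ be recovered as commutators; computing $\mathrm{ad}(\alpha x\partial_x+\beta y\partial_y)$ on these two fields gives eigenvalues $-\beta$ and $k\beta-\alpha$, so the condition is $\beta\neq 0$ together with $\alpha\neq k\beta$. When $\mathfrak g/\mathfrak g'$ is two-dimensional this forces it to be all of $\langle x\partial_x,\,y\partial_y\rangle$, giving the first form; when it is one-dimensional, scaling $\beta$ to $1$ gives the single field $a x\partial_x+y\partial_y$ and the second form with $a\neq 0,k$, where $a\neq k$ is exactly the commutator condition $\alpha\neq k\beta$ above and isolating the remaining excluded value belongs to the eigenvalue bookkeeping below. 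The final cosmetic normalization --- clearing the lower-order tails $\sum_{j<k}c_j y^j\partial_x$ and the affine part of $\eta$ from the chosen representatives by a change of variables solving only linear ODEs --- is routine.

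The step I expect to be the genuine obstacle is the reduction in the second paragraph: showing that $\mathfrak g$ is, modulo $\mathfrak g'$, generated by linear fields, that is, that the extra direction $y^{k+1}\partial_x$ allowed by the normalizer does not produce algebras beyond the two forms while keeping $\mathfrak g/\mathfrak g'$ abelian and $[\mathfrak g,\mathfrak g]$ equal to $\mathfrak g'$, and the parallel bookkeeping of eigenvalues that singles out precisely the conditions on $a$. Everything else is either an immediate consequence of Theorem \ref{prop2} or a routine coordinate computation; it is this control of the outer action on $\mathfrak g'$ that pins the classification down to the two stated forms.
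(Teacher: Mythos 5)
Your skeleton is the same as the paper's: Theorem \ref{prop2} applied to the nilpotent non-abelian algebra $\mathfrak g'$ gives the first assertion; the action of $\mathfrak g$ on the one-dimensional center $\langle\partial_x\rangle$ of $\mathfrak g'$ together with Lemma \ref{lem1} pins every element of $\mathfrak g$ to the form $(\lambda x+H(y))\partial_x+(a+by)\partial_y$ with $H$ polynomial of bounded degree; and the classification is then finished on $\mathfrak g/\mathfrak g'$. The problem is that you stop exactly where the proof begins. You yourself single out the elimination of the residual direction $y^{k+1}\partial_x$ from representatives of $\mathfrak g/\mathfrak g'$ as ``the heart of the matter'' and ``the genuine obstacle,'' and then you do not carry it out. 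This step is not routine: the paper first produces a representative with nonzero $x\partial_x$-component, normalizes it to $x\partial_x+y\partial_y$ by a substitution $\widetilde x=x+H(y)$ (which preserves the form of $\mathfrak g'$), and only then kills the $y^{k+1}$-tails of the remaining representatives via $[x\partial_x+y\partial_y,\,(ax+by^{k+1})\partial_x+cy\partial_y]=bky^{k+1}\partial_x$, which must lie in $\mathfrak g'$ and hence forces $b=0$ because $k\geq1$; it also disposes of the case where the projection of $\mathfrak g$ onto the fields $f(y)\partial_y$ is one-dimensional by noting that the commutator would then have rank $1$, contradicting $\partial_y\in\mathfrak g'$. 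None of this appears in your write-up, so the reduction to linear representatives --- the statement you correctly call the crux --- is simply asserted.

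The second gap is the exclusion $a\neq0$ in the one-parameter normal form. Your own (correct) computation of $\mathrm{ad}(\alpha x\partial_x+\beta y\partial_y)$ on $\partial_y$ and $y^k\partial_x$ yields only $\beta\neq0$ and $\alpha\neq k\beta$, i.e.\ $a\neq k$ after scaling $\beta$ to $1$; the condition $a\neq0$ does \emph{not} follow from requiring $[\mathfrak g,\mathfrak g]=\mathfrak g'$, since $\langle y\partial_y\rangle+\mathfrak g'$ also has derived algebra exactly $\mathfrak g'$ (indeed $[y\partial_y,\partial_y]=-\partial_y$, $[y\partial_y,y^j\partial_x]=jy^j\partial_x$ for $j\geq1$, and $[\partial_y,y\partial_x]=\partial_x$). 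You defer this point to ``eigenvalue bookkeeping below,'' but there is no below: the proposal ends without it. This is precisely where the paper leans on the existence of a representative with $\lambda\neq0$ (from $[\partial_x,X]=\lambda\partial_x$), and any complete proof must confront it head-on, either by justifying that such a representative exists or by explaining how the $a=0$ configuration is absorbed into the stated forms. As it stands, your text is a plausible plan whose two load-bearing steps are both announced and both missing.
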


\begin{proof}
The first statement is Theorem \ref{prop2}. The center of $\mathfrak{g}'$ is $\langle \partial_x \rangle$. Thus every element $X$ of $\mathfrak{g}$ normalizes $\langle \partial_x \rangle $
and therefore
$$
X = (\lambda x + H(y)) \partial_x + (a + by) \partial_y\, ,
$$
by Lemma \ref{lem1}. Since $[ \partial_y,\, X] \,= \, H'(y) \partial_x + b \partial_y$, it
now follows that $H$ is a polynomial of degree at most $k$.

$\textit{Case} \ 1$: The projection of $\mathfrak{g}$ on vector fields of the type $f(y) \partial_y$ is of dimension 2.

In this case a set of representatives of $\mathfrak{g}/\mathfrak{g}'$ is of the form
\begin{equation}\label{z1}
(\lambda x + \mu y^{k+1}) \partial_x + \tau y \partial_y\, .
\end{equation}
As $[\partial_x, \,(\lambda x + \mu y^{k+1})\partial_x + \tau y \partial_y] \,=\, \lambda \partial_x$, there must be
a representative of the form in \eqref{z1} with $\lambda \,\neq\, 0$. Thus we may assume that
$(x + \mu y^{k+1}) \partial_x + \tau y \partial_y$ is a representative of $\mathfrak{g}/\mathfrak{g}'$.
Assume first that $\tau \,\neq\, 0$; then we may take $\tau \,=\,1$. If, in general, we make a change of variables:
$$
\widetilde{x} = x + H(y), \ \ \widetilde{y} = y\, ,
$$
then $\partial_x = \partial_{\widetilde{x}}$ and $\partial_y \,=\, H'(\widetilde{y}) \partial_{\widetilde{x}} +
\partial_{\widetilde{y}}$.
Thus, in the coordinates $(\widetilde{x},\, \widetilde{y})$ there is a representative of the form
$\widetilde{x} \partial_{\widetilde{x}} + \widetilde{y} \partial_{\widetilde{y}}$.

Denote $\widetilde{x},\,\widetilde{y}$ by $x,\, y$ for the sake of convenience. Take any other representative
$(ax +by^{k+1})\partial_x + cy \partial_y$ $(here k \,\geq\, 1)$. Then
$$
[ x \partial_x + y \partial_y,\, (ax +by^{k+1})\partial_x + cy \partial_y] \,= \,bk y^{k+1} \partial_x\, .
$$
As $\mathfrak{g}'$ has only terms up to order $k$ in $\partial_x$, we must have $bk\,=\, 0$ and therefore $b \,=\, 0$.
Thus, the only possible representatives are of the form
$\lambda x \partial_x + \mu y \partial_y$ and hence $\mathfrak{g}$ is a subalgebra of
$$\widetilde{\mathfrak{g}} \,= \,\langle x \partial_x, \,y \partial_y \rangle + \mathfrak{g}'\, ,$$
and the commutators of $\widetilde{\mathfrak{g}}$ as well as of the algebra
$$
\langle a x \partial_x + y \partial_y \rangle + \mathfrak{g}', \ \ a \neq 0\, ,
$$
both coincide with $\mathfrak{g}'$. The argument when $\tau = 0$ is similar to the next case and we omit it here.

$\textit{Case} \ 2$: The projection of $\mathfrak{g}$ on vector fields of the type $f(y) \partial_y$ is one dimensional.

In this case every representative of $\mathfrak{g}/ \mathfrak{g}'$ is of the form
$(\lambda x + \mu y^{k+1})\partial_x$.
Consequently, $\mathfrak{g}$ is a subalgebra of the algebra
$\langle x \partial_x, y^{k+1} \partial_x \rangle + \mathfrak{g}'$.
Therefore, commutator of this algebra is clearly of rank 1. Thus, this case does not arise.
\end{proof}

\section{Solvable non-nilpotent algebras with abelian commutator}

Let $V(\mathbb{C}^{2})$ be the Lie algebra of all vector fields of the type
$$
P(x, y) \partial_x + Q(x, y) \partial_y\, ,
$$
where $P$ and $Q$ are $C^\infty$ functions defined on some open set of $\mathbb{C}^{2}$ and
$\mathfrak{g}$ be a finite dimensional solvable subalgebra of $V(\mathbb{C}^{2})$ which is not nilpotent
but its commutator $\mathfrak{g}'$ is abelian. Then, as seen in Lemma \ref{lem2}, either the rank of $\mathfrak{g}'$ is 1
or it is 2. In the latter case, $\langle \partial_x,\, \partial_y \rangle \,\subset\, \mathfrak{g}'$
in suitable coordinates
and hence $\mathfrak{g}$ is contained in the normalizer of $\langle \partial_x,\, \partial_y \rangle$. This is a
relatively straightforward case and the details are given in Section \ref{se6.1}.

\subsection{Solvable non-nilpotent algebras with commutator abelian of rank 2}\label{se6.1}

Assume first that the rank of $\mathfrak{g}'$ is two. In this case
$\mathfrak{g}' \,= \,\langle \partial_x,\, \partial_y \rangle$,
in suitable coordinates, and $\mathfrak{g}$ is a solvable subalgebra of the normalizer of $\mathfrak{g}'$ in
the Lie algebra of all vector fields in $\mathbb{R}$. The normalizer consists of all linear fields of the form
$$
(a_{11}x + a_{12}y + c_{1})\partial_x + (a_{21}x + a_{22}y + c_{2})\partial_y\, .
$$
Thus, $N(\mathfrak{g}')/ \mathfrak{g}'$ is isomorphic to $\mathfrak{gl}(2, \mathbb R)$ under the isomorphism
that sends any $A\, \in\, \mathfrak{gl}(2, \mathbb R)$ 
to the vector field $V_A$ on ${\mathbb R}^2$ defined by
$$
V_A\, :=\, (\partial_{x_1},\, \partial_{x_2})A\begin{pmatrix}
x_{1}\\ x_2
\end{pmatrix} \, .
$$
Note that if $D\, =\, C^{-1}AC$ and $\begin{pmatrix}
\widetilde{x}_{1}\\ \widetilde{x}_2
\end{pmatrix} \, =\,C^{-1}\begin{pmatrix}
{x}_{1}\\ {x}_2 \end{pmatrix}$,
where $C\, \in\, \text{GL}(2, \mathbb R)$, then we have
$$
V_{A} \,=\, (\partial_{\widetilde{x}_1},\, \partial_{\widetilde{x}_2})D
\begin{pmatrix}
\widetilde{x}_{1}\\ \widetilde{x}_2
\end{pmatrix}\, .
$$
This means that up to a point transformation, solvable algebras containing $\mathfrak{g}'$ have representatives in the maximal solvable subalgebras of $\mathfrak{gl}(2, \mathbb{R})$.
Up to conjugacy these are
$$
\langle
\begin{pmatrix}
a & b\\
0 & c
\end{pmatrix}
\, \Big\vert\, a, b, c \in \mathbb{R} \rangle \ \ \text{ or }
\langle
\begin{pmatrix}
a & 0\\
0 & a
\end{pmatrix},\
\begin{pmatrix}
0 & b\\
-b & 0
\end{pmatrix}\, \Big\vert\, a,\, b\, \in\, \mathbb R \rangle
$$
Thus, the maximal solvable subalgebras of $N(\mathfrak{g}')/ \mathfrak{g}'$ have representatives in
$$
\langle x \partial_x, y \partial_y \rangle\ \ \text{ or }\ \
\langle x \partial_x + y \partial_y, y \partial_x - x \partial_y \rangle\, .
$$
Finally keeping in mind that we want solvable subalgebras whose commutator is $\langle \partial_x, \partial_y \rangle$, we see that only the following types are possible:
\begin{enumerate}
\item $\langle x \partial_x, y \partial_y \rangle \ltimes \langle \partial_x, \partial_y \rangle$

\item $\langle x \partial_x + y \partial_y , y \partial_x - x \partial_y \rangle \ltimes \langle \partial_x, \partial_y \rangle$

\item $\langle x \partial_x + \lambda y \partial_y \rangle \ltimes \langle \partial_x, \partial_y \rangle$

\item $\langle \lambda(x \partial_x + y \partial_y) + y \partial_x - x \partial_y \rangle \ltimes \langle \partial_x, \partial_y \rangle$
\end{enumerate}

\subsection{Solvable non-nilpotent algebras with commutator abelian of rank 1}\label{secl}

Here we need to assume initially that we are working over $\mathbb{C}$.

\begin{lemma}\label{lem6.1}
Assume that $\mathfrak{g}$ is solvable and not nilpotent and $\mathfrak{g}'$ is
abelian of rank one. Then in suitable coordinates one of the following holds:
\begin{enumerate}
\item every element of $\mathfrak{g}$ is of the form $(a x + H(y)) \partial_{x}$;

\item or every element of $\mathfrak{g}$ is of the form
$(a x + H(y)) \partial_{x} + b \partial_{y}$
with at least one element whose coefficient $b$ is nonzero.
\end{enumerate}
\end{lemma}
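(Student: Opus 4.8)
The plan is to straighten the commutator first, then normalize the $\partial_y$–part of $\mathfrak g$ via the projection of Lemma \ref{lem1}, and finally normalize the $\partial_x$–part using Lie's theorem over $\mathbb C$. Since $\mathfrak g'$ is abelian of rank one, I would choose a nonzero $Z\in\mathfrak g'$ and straighten it to $\partial_x$; then any $W=f\partial_x+g\partial_y\in\mathfrak g'$ must commute with $\partial_x$, forcing $f_x=g_x=0$, while rank one forces $g\equiv 0$, so $W=f(y)\partial_x$. Hence $\mathfrak g'\subset\{h(y)\partial_x\}$. As $\mathfrak g'$ is an ideal, every $X\in\mathfrak g$ normalizes it; computing the $\partial_y$–component of $[X,\,h(y)\partial_x]$ gives $-h(y)Q_x$, where $Q$ is the $\partial_y$–coefficient of $X$, and its vanishing yields $Q=Q(y)$. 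Thus every element of $\mathfrak g$ has the form $P(x,y)\partial_x+Q(y)\partial_y$, which is exactly the setting of Lemma \ref{lem1}.

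Next I would study the projection $\Pi$ of Lemma \ref{lem1}. Since $\mathfrak g'\subset\{h(y)\partial_x\}$ lies in $\ker\Pi$, the image satisfies $\Pi(\mathfrak g)'=\Pi(\mathfrak g')=0$, so $\Pi(\mathfrak g)$ is abelian. A nonzero finite–dimensional abelian algebra of fields on the $y$–line has rank one, and after straightening a generic element to $\partial_y$ any field commuting with $\partial_y$ is a constant multiple of it; hence $\dim\Pi(\mathfrak g)\le 1$. If $\Pi(\mathfrak g)=0$ then $Q\equiv 0$ for every element; if $\dim\Pi(\mathfrak g)=1$ then, after a change of variables involving only $y$, $\Pi(\mathfrak g)=\langle\partial_y\rangle$ and every $Q$ is a constant, at least one of them nonzero. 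This is the dichotomy behind the two cases, and it fixes the $\partial_y$–part; note that a change in $y$ alone preserves the form $\mathfrak g'\subset\{h(y)\partial_x\}$.

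Finally I would normalize $P$ by Lie's theorem. As $\mathfrak g$ is solvable over $\mathbb C$ and acts on the finite–dimensional space $\mathfrak g'$ by the adjoint representation, there is a common eigenvector $h_0(y)\partial_x\in\mathfrak g'$ with $[X,\,h_0(y)\partial_x]=\chi(X)\,h_0(y)\partial_x$ for a character $\chi$. The substitution $\widetilde x=x/h_0(y)$, $\widetilde y=y$ (valid where $h_0\neq 0$) carries $h_0(y)\partial_x$ to $\partial_{\widetilde x}$ while leaving the $\partial_{\widetilde y}$–coefficient unchanged, so in these coordinates the common eigenvector is $\partial_x$. Then $[X,\,\partial_x]=-P_x\partial_x=\chi(X)\partial_x$ forces $P_x=-\chi(X)$ to be constant, whence $P=ax+H(y)$ with $a=-\chi(X)$. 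Combining this with the previous step gives the two stated normal forms.

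I expect the main obstacle to be bookkeeping the coordinate changes so that they do not interfere: the $y$–change normalizing $Q$ and the $x$–change $\widetilde x=x/h_0(y)$ straightening the eigenvector must be ordered so as to preserve both $\mathfrak g'\subset\{h(y)\partial_x\}$ and the constancy of $Q$. Performing the $y$–normalization first and then checking that $\widetilde x=x/h_0(y)$ leaves the $\partial_{\widetilde y}$–coefficient unchanged resolves this; the genuinely conceptual input is the application of Lie's theorem over $\mathbb C$ to produce the common eigenvector, which is exactly why the lemma must be stated over $\mathbb C$.
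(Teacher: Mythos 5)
Your proposal is correct and follows essentially the same route as the paper: Lie's theorem over $\mathbb{C}$ applied to the adjoint action on $\mathfrak{g}'$ produces an invariant line that is straightened to $\langle\partial_x\rangle$, the eigenvector relation $[X,\partial_x]=\chi(X)\partial_x$ forces $P=ax+H(y)$, and the (abelian, hence at most one-dimensional) projection onto fields $f(y)\partial_y$ yields the dichotomy. You merely perform the normalizations in a different order and make explicit two points the paper leaves implicit (that $\mathfrak{g}'$ consists of fields $h(y)\partial_x$ and that the coordinate changes do not interfere), which is a harmless and slightly more careful presentation of the same argument.
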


\begin{proof}
By assumption,
$\mathfrak{g}'$ is of rank 1 and abelian, say
$$\mathfrak{g}' \,=\, \langle \phi_{1}(y) \partial_{x}, \,\cdots,\, \phi_{k}(y) \partial_{x} \rangle\, ,
$$
where $\{\phi_{i}\}_{1 \leq i \leq k}$ are linearly independent functions.
By Lie's theorem, $\mathfrak{g}$ fixes a line in $\mathfrak{g}'$ say $ \langle f(y) \partial_{x} \rangle$. Choose
variables $\widetilde{x}$ and $\widetilde{y}$ so that $f(y) \partial_{x} \,= \, \partial_{\widetilde{x}}$,
$y \,=\, \widetilde{y}$. Denote $\widetilde{x}$, $\widetilde{y}$ again by $x$, $y$ for convenience. We now have
$$
[ \partial_{x}, \,X]\, =\, \lambda \partial_{x}
$$
for all $X \,\in\, \mathfrak{g}$. Thus, if $X \,=\, P(x, y) \partial_{x} + Q(x, y) \partial_{y}$, then
$$
\frac{\partial P}{\partial x}\, =\, \lambda, \ \ \ \frac{\partial Q}{\partial x} \,=\, 0\, .
$$
Hence, every element of $\mathfrak{g}$ is of the form $(\lambda x + H(y)) \partial_{x} + Q(y) \partial_{y}$.
By a change of variables involving only $y$, we may suppose that the projection of $\mathfrak{g}$ on
fields of the type $f(y) \partial_{y}$ is a subalgebra of the algebra
$\langle (a + b y) \partial_{y},\, a,\, b \,\in\, \mathbb{C} \rangle$.
If this projection is of dimension 2, then both $\partial_{y}$ as well as $y \partial_{y}$ and hence
$[ \partial_{y},\, y \partial_{y}] \,=\, \partial_{y}$ will belong to the projection of $\mathfrak{g}'$. This is not possible because the rank of $\mathfrak{g}'$ is one.

Thus, either $\mathfrak{g}$ is of the form $(a x + H(y)) \partial_{x}$ or the projection of $\mathfrak{g}$ on vector fields of the type $f(y) \partial_{y}$ is of dimension 1. By a further linear change of variables involving only $y$, we may suppose the projection is $\langle \partial_{y} \rangle$. This completes the proof of the lemma.
\end{proof}

\begin{proposition}\label{propa}
If $\mathfrak{g}$ is a solvable and not nilpotent algebra and rank of $\mathfrak{g}$ is 1, then $\mathfrak{g}$ in suitable coordinates is equivalent to
$\mathfrak{g} \,= \, \langle x \partial_{x} \rangle + \mathfrak{g}'$
and $$\mathfrak{g}' \,=\, \langle \phi_{1}(y) \partial_{x},\, \cdots,\, \phi_{N}(y) \partial_{x} \rangle\, ,$$
where the $\phi_{i}$ are linearly independent $C^{\infty}$ functions.
\end{proposition}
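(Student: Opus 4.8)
The plan is to reduce to Lemma \ref{lem6.1} and then pin down the two summands $\langle x\partial_x\rangle$ and $\mathfrak{g}'$ by an explicit bracket computation. First I would verify that the hypotheses of Lemma \ref{lem6.1} are actually met, i.e.\ that $\mathfrak{g}'$ is abelian of rank one. Since $\mathfrak{g}$ has rank $1$, its derived algebra $\mathfrak{g}'$ has rank at most $1$; it cannot be zero, for then $\mathfrak{g}$ would be abelian and hence nilpotent, contrary to hypothesis; and it cannot be non-abelian, for by Theorem \ref{prop2} a nilpotent non-abelian commutator would contain $\langle\partial_x,\,\partial_y\rangle$ and thus force ${\rm rank}(\mathfrak{g})\,=\,2$. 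Hence $\mathfrak{g}'$ is abelian of rank one and Lemma \ref{lem6.1} applies.

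Next I would discard alternative (2) of that lemma on rank grounds. If some element of $\mathfrak{g}$ were of the form $(ax+H(y))\partial_x+b\partial_y$ with $b\neq 0$, then at a generic point this field together with a nonzero generator $\phi(y)\partial_x$ of $\mathfrak{g}'$ would be linearly independent, giving ${\rm rank}(\mathfrak{g})\,=\,2$, a contradiction. Therefore we are in alternative (1): after the coordinate change of Lemma \ref{lem6.1}, every element of $\mathfrak{g}$ has the shape $(ax+H(y))\partial_x$.

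Now I would introduce the map $\lambda\colon\mathfrak{g}\to\mathbb{C}$ sending $(ax+H(y))\partial_x$ to $a$. A direct bracket computation gives $[(a_1 x+H_1)\partial_x,\,(a_2 x+H_2)\partial_x]\,=\,(a_2 H_1-a_1 H_2)\partial_x$, which carries no $x\partial_x$-term; hence $\lambda$ is a homomorphism onto the abelian $\mathbb{C}$, its kernel $\mathfrak{k}\,=\,\{H(y)\partial_x\in\mathfrak{g}\}$ is an abelian ideal containing $\mathfrak{g}'$, and since $\mathfrak{g}$ is not nilpotent $\lambda$ must be nonzero. Choosing an element with nonzero $\lambda$-value, scaling it to $(x+H_0(y))\partial_x$, and making the change of variables $\widetilde{x}\,=\,x+H_0(y)$, $\widetilde{y}\,=\,y$ (which preserves the normal form and sends this element to $\widetilde{x}\,\partial_{\widetilde{x}}$), I may assume $x\partial_x\in\mathfrak{g}$.

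Finally I would identify $\mathfrak{k}$ with $\mathfrak{g}'$. The computation $[x\partial_x,\,H(y)\partial_x]\,=\,-H(y)\partial_x$ shows every element of $\mathfrak{k}$ lies in $\mathfrak{g}'$, so $\mathfrak{k}\subseteq\mathfrak{g}'$; combined with $\mathfrak{g}'\subseteq\mathfrak{k}$ this yields $\mathfrak{g}'\,=\,\mathfrak{k}$, and choosing a basis $\phi_1(y)\partial_x,\,\dots,\,\phi_N(y)\partial_x$ of linearly independent functions gives the stated form $\mathfrak{g}\,=\,\langle x\partial_x\rangle+\mathfrak{g}'$. I expect the main obstacle to be the first reduction --- confirming that $\mathfrak{g}'$ is abelian of rank one so that Lemma \ref{lem6.1} is available --- since the subsequent steps are short bracket computations together with a single coordinate change.
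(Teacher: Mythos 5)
Your proposal is correct and follows essentially the same route as the paper: straighten the invariant line in $\mathfrak{g}'$ to $\langle\partial_x\rangle$ so every element becomes $(\lambda x+F(y))\partial_x$, use non-nilpotency to find an element with $\lambda\neq 0$, absorb $F$ by the substitution $\widetilde{x}=x+F(y)$ to place $x\partial_x$ in $\mathfrak{g}$, and conclude from $[x\partial_x,\,H(y)\partial_x]=-H(y)\partial_x$ that everything else lies in $\mathfrak{g}'$. Your explicit verification that $\mathfrak{g}'$ is abelian of rank one (so Lemma \ref{lem6.1} applies) and the elimination of its second alternative on rank grounds are just a slightly more careful packaging of the paper's opening reduction.
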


\begin{proof}
As $\mathfrak{g}$ is of rank $1$, every element of $\mathfrak{g}$ is of the form $P(x, y) \partial_{x}$.
By repeating the argument in lemma \ref{lem6.1}, we may suppose every element fixes the line $\langle \partial_{\widetilde{x}} \rangle$ in $\mathfrak{g}'$. and
$$
\mathfrak{g}' \,=\, \langle \phi_{1}(\widetilde{y}) \partial_{\widetilde{x}},\, \cdots,\, \phi_{N}(\widetilde{y})
\partial_{\widetilde{x}} \rangle
$$
with $\phi_{1}(\widetilde{y}) \,=\, 1$.
Denoting $\widetilde{x}$, $\widetilde{y}$ again by $x$, $y$ we see that the equation
$[ \partial_{x},\, P(x, y) \partial_{x} ]\, =\, \lambda \partial_{x}$
implies $P \,=\, \lambda x + F(y)$. Hence, every element is of the form
$X \,=\, ( \lambda x + F(y)) \partial_{x}$.
If $\lambda = 0$ for all elements $X$, then $\mathfrak{g}$ will be abelian. Thus, we may suppose
that $(x + F(y)) \partial_{x}$ is in $\mathfrak{g}$. By the change of variables
$$
\widetilde{x} \,=\, x + F(y),\ \ \widetilde{y} \,=\, y\, ,
$$
we have $\partial_{x} \,=\, \partial_{\widetilde{x}}$ and $\partial_{y} \,=\, \partial_{\widetilde{y}} + F'(\widetilde{y}) \partial_{\widetilde{x}}$. Therefore, in these coordinates $\widetilde{x} \partial_{\widetilde{x}}$ is in $\mathfrak{g}$. Hence, if $X
\,=\, \lambda \widetilde{x} \partial_{\widetilde{x}} + F(\widetilde{y}) \partial_{\widetilde{x}} \in \mathfrak{g}$, then
$$[ \widetilde{x} \partial_{\widetilde{x}},\, X ] \,=\, - F(\widetilde{y}) \partial_{\widetilde{x}}\,\in\, \mathfrak{g}'\,.$$
Thus, $F(\widetilde{y})$ is in $\mathfrak{g}'$ and
$\mathfrak{g}/ \mathfrak{g}' \,=\, \langle \widetilde{x} \partial_{\widetilde{x}} \rangle$.
\end{proof}

\begin{theorem}\label{prop6.5}
Assume that
\begin{itemize}
\item $\mathfrak{g}$ is solvable and not nilpotent,

\item $\mathfrak{g}$ is of rank $2$ and

\item $\mathfrak{g}'$ is abelian and of rank $1$.
\end{itemize}
Then, in suitable coordinates, at least one of $\partial_{y}$ or $x\partial_{x}+\partial_{y}$
is in $\mathfrak{g}$.

Moreover, if $\partial_{y}$ is in $\mathfrak{g}$, then $\mathfrak{g}'$
is a sum of generalized eigenspaces of
$\partial_{y}$ of the form $e^{\lambda y}P(y) \partial_{x}$, where $P(y)$ are polynomials of degree
less than the multiplicity of the eigenvalue $\lambda$ of $\partial_{y}$ in
$\mathfrak{g}'$.

Similarly, if $x\partial_{x}+\partial_{y}$
is in $\mathfrak{g}$, then $\mathfrak{g}'$
is a sum of generalized eigenspaces of
$x\partial_{x}+\partial_{y}$ of the form $e^{(\lambda+1)y}P(y) \partial_{x}$, where $P(y)$ are polynomials of degree
less than the multiplicity of the eigenvalue $\lambda$ of $x\partial_{x}+\partial_{y}$ in
$\mathfrak{g}'$.
\end{theorem}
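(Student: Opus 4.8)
The plan is to combine the normal form furnished by Lemma \ref{lem6.1} with the rank hypothesis, and then to read off the structure of $\mathfrak{g}'$ from the observation that it is a finite-dimensional space of functions of $y$ invariant under the differentiation operator $d/dy$.

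First I would invoke Lemma \ref{lem6.1}. Since $\mathfrak{g}$ has rank $2$, alternative (1) of that lemma, in which every element has the shape $(ax+H(y))\partial_x$, cannot occur: all such fields are proportional to $\partial_x$ at each point, so the generic orbit would be one dimensional. Hence we are in alternative (2), and after the coordinate choice made there every element of $\mathfrak{g}$ has the form $(\lambda x+H(y))\partial_x+c\,\partial_y$ with $c$ constant, and there is at least one element with nonzero $\partial_y$-coefficient, which after scaling can be written $X_0=(\lambda_0 x+H_0(y))\partial_x+\partial_y$. To normalize $X_0$ I would apply the fiber-preserving change of variables $\widetilde{x}=x+G(y)$, $\widetilde{y}=y$, which replaces the coefficient of $\partial_{\widetilde{x}}$ by $\lambda_0\widetilde{x}+(H_0-\lambda_0 G+G')$; choosing $G$ to solve the linear first-order ODE $G'-\lambda_0 G=-H_0$ removes the $y$-dependent term and brings $X_0$ to $\lambda_0 x\partial_x+\partial_y$. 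If $\lambda_0=0$ this is $\partial_y$, while if $\lambda_0\neq 0$ the rescaling $y\mapsto\lambda_0 y$ together with multiplication of the field by $\lambda_0^{-1}$ turns it into $x\partial_x+\partial_y$. This establishes the first assertion.

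For the remaining assertions I would first record that, in these coordinates, a direct bracket computation for two fields $(\lambda x+H)\partial_x+c\,\partial_y$ and $(\mu x+K)\partial_x+d\,\partial_y$ shows that the $x\partial_x$-terms cancel, so that $\mathfrak{g}'=\langle\phi_1(y)\partial_x,\ldots,\phi_N(y)\partial_x\rangle$ for linearly independent functions $\phi_i$ of $y$ alone, consistent with $\mathfrak{g}'$ being abelian of rank $1$. Writing $W=\langle\phi_1,\ldots,\phi_N\rangle$, the identity $[\partial_y,\phi\partial_x]=\phi'\partial_x$ shows that $\operatorname{ad}\partial_y$ on $\mathfrak{g}'$ is carried to $\phi\mapsto\phi'$ on $W$; since $\partial_y\in\mathfrak{g}$ and $\mathfrak{g}'$ is an ideal, $W$ is a finite-dimensional subspace invariant under $D:=d/dy$. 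I would then decompose $W$ into the generalized eigenspaces $W_\lambda=\ker(D-\lambda)^{m_\lambda}$ and use the classical fact that every solution of $(D-\lambda)^m f=0$ has the form $e^{\lambda y}P(y)$ with $\deg P<m$; since $\dim W_\lambda=m_\lambda$ equals the multiplicity of $\lambda$, the space $W_\lambda$ is exactly $\{e^{\lambda y}P:\deg P<m_\lambda\}$, so each eigenvalue contributes a single Jordan block. For the third assertion the argument is identical once one notes $[\,x\partial_x+\partial_y,\,\phi\partial_x\,]=(\phi'-\phi)\partial_x$, so that $\operatorname{ad}(x\partial_x+\partial_y)$ corresponds to $D-I$ and its generalized $\lambda$-eigenspace consists of the fields $e^{(\lambda+1)y}P(y)\partial_x$.

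The only genuinely non-formal ingredient is the identification of $W$ as a differentiation-invariant space whose generalized eigenspaces are exponential-polynomial blocks; the main point to verify carefully is that $\dim W_\lambda$ coincides with the degree bound, i.e. that each eigenvalue gives a single Jordan block, which follows because $\ker(D-\lambda)^{m}$ is already $m$-dimensional. Everything else reduces to a routine normalization via a linear ODE and a rescaling.
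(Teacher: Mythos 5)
Your proof is correct and follows essentially the same route as the paper: reduce to case (2) of Lemma \ref{lem6.1} using the rank hypothesis, normalize a distinguished element by a shift $\widetilde{x}=x+G(y)$ (you unify the paper's two cases $a=0$ and $a\neq 0$ into a single linear ODE $G'-\lambda_0G=-H_0$, which is a minor streamlining), and then identify $\operatorname{ad}\partial_y$ and $\operatorname{ad}(x\partial_x+\partial_y)$ on $\mathfrak{g}'$ with $D$ and $D-I$ acting on a finite-dimensional $D$-invariant space of functions. Your explicit dimension count showing that $\ker(D-\lambda)^{m}$ is exactly $m$-dimensional, hence each eigenvalue contributes a single Jordan block, is a welcome justification of a step the paper leaves implicit.
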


\begin{remark}
Precise description of $\mathfrak{g}/\mathfrak{g}'$ is given in Corollary \ref{cor6.4}.
\end{remark}

\begin{proof}[{Proof of Theorem \ref{prop6.5}}]
By Lemma \ref{lem6.1}, every element of $\mathfrak{g}$ in suitable coordinates is of the form
$$
(a x + H(y)) \partial_{x} + b \partial_{y}
$$
and there is at least one such element of $\mathfrak{g}$ in which $b \,\neq\, 0$ (as we are assuming that the rank of
$\mathfrak{g}$ is $2$). Consequently, we may suppose that $b \,=\,1$ for some element. Fix such an element as
\begin{equation}\label{efx}
X\, =\, (a x + H(y)) \partial_{x} + \partial_{y}\,.
\end{equation}
If in this element $a \,=\, 0$, then $X \,=\, H(y) \partial_{x} + \partial_{y}$ is in $\mathfrak{g}$.

If $H(y)\, =\, 0$, then $\partial_{y}$ is in $\mathfrak{g}$. Otherwise, the canonical form of $X$
is $\partial_{\widetilde{y}}$, where $\widetilde{x}\,=\, x+F(y)$, $\widetilde{y}\,=\, y$ and
$\frac{d F}{d y}\,=\, H$. In these coordinates,
$$
\partial_{x}\,=\, \partial_{\widetilde{x}}, \ \ \partial_{y}\,=\, \partial_{\widetilde{y}}+F'(\widetilde{y})
\partial_{\widetilde{x}}\, .
$$
Hence, in the variables $\widetilde{x},\, \widetilde{y}$, every element is of the same form as before.
Writing $\widetilde{x},\, \widetilde{y}$ as $x,\, y$ for convenience, we may assume that $\partial_{y}$
is in $\mathfrak{g}$. Thus, as $\mathfrak{g}'$ contains elements of the form $G(y)\partial_{x}$, the
generalized eigenspaces of $\partial_{y}$ in $\mathfrak{g}'$ are of the form $e^{\lambda y}P(y) \partial_{x}$,
where $P(y)$ are polynomials of degree
less than the multiplicity of the eigenvalue $\lambda$ of $\partial_{y}$ in
$\mathfrak{g}'$.

If in the element $X$ in \eqref{efx} we have $a\, \neq\, 0$, we may suppose, by scaling $y$, if necessary, that
$a\,=\, 1$. Therefore, we may suppose that $X\, =\, (x + H(y)) \partial_{x} + \partial_{y}$ is in $\mathfrak{g}$.
The arguments are similar to the case considered above: the change of variables
\begin{equation}\label{ecv}
\widetilde{x}\,=\, x+ e^y F(y),\ \ \widetilde{y}\,=\, y\, ,
\end{equation}
where $\frac{\partial F}{\partial y}\,=\,
-e^{-y}H(y)$, transforms $X$ to $\widetilde{x}\partial_{\widetilde{x}}+\partial_{\widetilde{y}}$ and 
$\partial_{x}$ to $\partial_{\widetilde{x}}$. In general, any change of variables
$\widetilde{x}\,=\, x+ K(y)$, $\widetilde{y}\,=\, y$ transforms $\partial_{x}$ to $\partial_{\widetilde{x}}$
and $\partial_{y}$ to $\partial_{\widetilde{y}}+K'(\widetilde{y})\partial_{\widetilde{x}}$.

Therefore, this change of variables in \eqref{ecv} leaves the form of $\mathfrak g$ unchanged. Writing
$\widetilde{x},\, \widetilde{y}$ as $x,\, y$ for convenience, we may suppose that $x\partial_{x}+\partial_{y}$
is in $\mathfrak g$. The invariant subspaces of $x\partial_{x}+\partial_{y}$
and $\partial_{y}$ in $\mathfrak{g}'$ are the
same. Consequently, $\mathfrak{g}'$ is a sum of generalized eigenspaces of
$x\partial_{x}+\partial_{y}$ of the form $e^{(\lambda+1)y}P(y) \partial_{x}$, where $P(y)$ are polynomials of degree
less than the multiplicity of the eigenvalue $\lambda$ of $x\partial_{x}+\partial_{y}$ in
$\mathfrak{g}'$.
\end{proof}

\noindent
\textbf{Notation.}\, In the following corollary $S$ is a finite set of complex numbers and
$n_\lambda$ is a positive integer for $\lambda\, \in\,S$; also, ${\mathfrak h}\,=\,\sum_{\lambda\, \in\,S}
V(\lambda)$, where $V(\lambda)$ consists of all vector fields of the type $e^{\lambda y} P(y)
\partial_{x}$ with ${\rm degree}(P)\, <\, n_{\lambda}$.

\begin{corollary}\label{cor6.4}
The Lie algebra $\mathfrak g$ in Theorem \ref{prop6.5} is one of the following forms:
\begin{enumerate}
\item $\mathfrak{g}\,=\, \langle \mathfrak{h},\, \partial_{y}\rangle$, if zero is not an eigenvalue of
$\partial_{y}$ operating on $\mathfrak{h}$.

\item $\mathfrak{g}\,=\, \langle \mathfrak{h},\, \partial_{y},\, x\partial_{x}\rangle$.

\item $\mathfrak{g}\,=\, \langle \mathfrak{h},\, \partial_{y},\, y^N\partial_{x}\rangle$, where $N$
is the multiplicity of the zero eigenvalue of $\partial_{y}$ operating on $\mathfrak{h}$.

\item $\mathfrak{g}\,=\, \langle \mathfrak{h},\, x\partial_{x} +\partial_{y}\rangle$,
if zero is not an eigenvalue of $x\partial_{x}+\partial_{y}$ operating on $\mathfrak{h}$.

\item $\mathfrak{g}\,=\, \langle \mathfrak{h},\, x\partial_{x} +\partial_{y},\, y^Ne^y\partial_{x}\rangle$,
where $N\, \geq\, 0$ is the multiplicity of the zero eigenvalue of $x\partial_{x}+\partial_{y}$ operating
on $\mathfrak{h}$.

\item $\mathfrak{g}\,=\, \langle \mathfrak{h},\, x\partial_{x} +\partial_{y},\,
x\partial_{x} +y^Ne^y\partial_{x}\rangle$,
where $N\, > \, 0$ is the multiplicity of the zero eigenvalue of $x\partial_{x}+\partial_{y}$ operating
on $\mathfrak{h}$.
\end{enumerate}
In all the above cases, $\mathfrak{h}\,=\, \mathfrak{g}'$.
\end{corollary}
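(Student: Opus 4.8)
The plan is to take Theorem \ref{prop6.5} as the starting point: it already identifies $\mathfrak{g}'\,=\,\mathfrak{h}$ as a sum of generalized eigenspaces $V(\lambda)$ of $\mathrm{ad}(T)$, each a single Jordan block, where $T\,=\,\partial_y$ in the first normalization and $T\,=\,x\partial_x+\partial_y$ in the second is a fixed element of $\mathfrak{g}$. So the only thing left is to pin down $\mathfrak{g}/\mathfrak{g}'$, equivalently to list the elements of $\mathfrak{g}$ modulo $\mathfrak{g}'$. By Lemma \ref{lem6.1} every element has the form $(ax+H(y))\partial_x+b\partial_y$, so I would introduce the map $\rho\,:\,\mathfrak{g}\,\longrightarrow\,\mathbb{C}^2$, $X\mapsto(a,b)$; since $\mathbb{C}^2$ is abelian and $\mathfrak{g}'$ consists of fields $G(y)\partial_x$ (for which $a=b=0$), $\rho$ is a homomorphism that kills $\mathfrak{g}'$ and descends to $\mathfrak{g}/\mathfrak{g}'$. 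On the coefficient functions $H(y)$ the operator $\mathrm{ad}(T)$ acts as $d/dy$ in the first case and $d/dy-1$ in the second; in either case its kernel is one dimensional, spanned by $\sigma\,=\,1$ respectively $\sigma\,=\,e^y$, and $\sigma\partial_x$ spans the $\lambda=0$ line in $\mathfrak{g}'$.

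First I would prove $\dim\mathfrak{g}/\mathfrak{g}'\le2$. Writing $\mathfrak{k}\,=\,\ker\rho$ for the fields $H(y)\partial_x$ lying in $\mathfrak{g}$ and $W$ for the corresponding space of functions, the identity $[H(y)\partial_x,\,K(y)\partial_x]\,=\,0$ shows that $\mathfrak{k}$ is abelian and that $\mathrm{ad}(T)$ carries $W$ into the function space $G$ of $\mathfrak{g}'$, while $G\subseteq W$. Since the kernel of $\mathrm{ad}(T)$ on functions is one dimensional, $\dim(W/G)=\dim W-\dim G\le\dim\ker(\mathrm{ad}(T)|_W)\le1$, so $\dim(\mathfrak{k}/\mathfrak{g}')\le1$ and $\dim\mathfrak{g}/\mathfrak{g}'\,=\,\dim\rho(\mathfrak{g})+\dim(\mathfrak{k}/\mathfrak{g}')$. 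The two regimes are then $\dim\rho(\mathfrak{g})\in\{1,2\}$, and I would show that these cannot both be maximal at once, because an $x\partial_x$-type direction acts on $\mathfrak{g}'$ as the scalar $-1$ and thereby forces $\mathfrak{k}=\mathfrak{g}'$.

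In the regime $\dim\rho(\mathfrak{g})=1$ one has $\mathfrak{g}=\langle T\rangle+\mathfrak{k}$, and the only possible extra direction is a single kernel vector of $\mathrm{ad}(T)$, which acts trivially on $\mathfrak{g}'$. Using that each $V(\lambda)$ is one Jordan block, I would solve $\mathrm{ad}(T)W=G$ to see that this extra vector, when present, is forced to be $y^N\partial_x$ (first case) or $y^Ne^y\partial_x$ (second case), with $N$ the multiplicity of the zero eigenvalue; its absence gives cases (1) and (4), its presence cases (3) and (5). In the regime $\dim\rho(\mathfrak{g})=2$ there is an element with $a\neq0$; after subtracting a multiple of $T$ and solving the elementary equation $F'=H$ by the substitution $\widetilde{x}=x+F(y)$, exactly as in \eqref{ecv}, I would normalize it, and the scalar action just noted gives $\mathfrak{k}=\mathfrak{g}'$, yielding case (2) and, when a zero-eigenvalue block obstructs full purification, case (6). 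In every branch $\mathfrak{g}'=\mathfrak{h}$ by Theorem \ref{prop6.5}, which is the final assertion.

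The step I expect to be the main obstacle is the normalization of the second generator in the rank-two regime: clearing the function $H$ from $(x+H(y))\partial_x$ is governed by $\mathrm{ad}(T)H\in G$, and $H$ can be removed modulo $G$ and the single coordinate freedom $\widetilde{x}=x+c\sigma$ except for one resonant term $y^N\sigma\partial_x$ that appears precisely when zero is an eigenvalue with a nontrivial block. Pinning down this one surviving term is where the single-Jordan-block property of Theorem \ref{prop6.5} is indispensable. It is also the delicate point where the two case families meet: since $\mathrm{ad}(T)(y^N\sigma)\partial_x$ already lies in $\mathfrak{g}'$, the further substitution $\widetilde{x}=x+cy^N\sigma$ can be used to restore $T$ and remove the resonance, so the forms (2) and (6) describe the same algebras up to equivalence; the corollary should therefore be read as a list of representative forms rather than an irredundant classification, and keeping track of which form is being recorded is the subtlety that needs the most care.
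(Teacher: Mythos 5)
Your proposal is correct in substance but organizes the argument differently from the paper. The paper's proof fixes the normalized element $T$ ($\partial_y$ or $x\partial_x+\partial_y$), uses surjectivity of $\operatorname{ad}T$ on the nonzero generalized eigenspaces of $\mathfrak{g}'$ to correct an arbitrary $X\in\mathfrak{g}$ by an element $Y\in\mathfrak{g}'$ so that $X-Y$ lies in the centralizer of $T$ inside the ambient space $\{(ax+H(y))\partial_x+b\partial_y\}$, computes that centralizer explicitly ($\langle x\partial_x,\partial_y,\partial_x\rangle$, resp.\ $\langle x\partial_x,\partial_y,e^y\partial_x\rangle$), and then rules out the two--dimensional quotient via $[x\partial_x,\partial_x]=-\partial_x$. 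You never compute a centralizer; instead you introduce the homomorphism $\rho:X\mapsto(a,b)$, split $\dim\mathfrak{g}/\mathfrak{g}'=\dim\rho(\mathfrak{g})+\dim(\mathfrak{k}/\mathfrak{g}')$, and bound the second summand by rank--nullity applied to $\operatorname{ad}T:W\to G$ together with the one--dimensionality of $\ker(d/dy)$, resp.\ $\ker(d/dy-1)$. The underlying linear algebra (invertibility of $\operatorname{ad}T$ on nonzero blocks, a one--dimensional cokernel concentrated at the zero eigenvalue producing the resonant generators $y^N\partial_x$ and $y^Ne^y\partial_x$) is the same, but your bookkeeping is cleaner in two respects: it cleanly separates the $\mathfrak{k}$--direction from the $\rho$--directions and shows they cannot both be nontrivial, and it covers the second family (4)--(6) in parallel, which the paper only sketches by analogy. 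Your closing observation that form (6) is in fact equivalent to form (2) under $\widetilde{x}=x+cy^Ne^y$ (which perturbs $x\partial_x+\partial_y$ only by an element of $\mathfrak{g}'$) is correct and is not in the paper; it does not contradict the corollary, which only asserts membership in the list.

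Two small slips to repair in a write-up. First, the "extra direction" in the regime $\dim\rho(\mathfrak{g})=1$ is a kernel vector of the map induced by $\operatorname{ad}T$ on $W/G$, not of $\operatorname{ad}T$ itself: $y^N\partial_x$ is not annihilated by $\partial_y$ for $N\geq1$, it is merely mapped into $\mathfrak{g}'$; your subsequent identification of it by solving $\operatorname{ad}(T)W\subseteq G$ with single Jordan blocks is the correct statement. Second, to normalize a generator of the form $(x+H(y))\partial_x$ (no $\partial_y$ component) one substitutes $\widetilde{x}=x+K(y)$ with $K\equiv H$ modulo $G$ and the resonant term --- no integration $F'=H$ is needed there; the equation $F'=H$ (and the variant \eqref{ecv}) belongs to the normalization of $H(y)\partial_x+\partial_y$ carried out already in Theorem \ref{prop6.5}. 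Neither slip affects the conclusion, but the constraint that the substitution must preserve the already-normalized $T$ modulo $\mathfrak{g}'$ (i.e.\ $K'\in G$, resp.\ $K'-K\in G$) is exactly where the case distinction between (2), (3), (5), (6) is decided, so it should be stated precisely.
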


\begin{proof}
We will indicate a proof only for the first three cases as the proofs for the remaining cases are
similar.

If $\partial_{y}\, \in\, \mathfrak g$ and $X$ is in $\mathfrak g$, then $[\partial_{y},\, X]\, \in\, 
{\mathfrak g}'$. Now $\partial_{y}$ is surjective on its generalized eigenspaces in ${\mathfrak g}'$
for nonzero eigenvalues, whereas, if $N$ is the multiplicity of the zero eigenvalue
of $\partial_{y}$ operating on $\mathfrak{g}'$, then $\partial_{y}$ and $y^N\partial_{x}$ generate the
generalized null eigenspace for $\partial_{y}$ operating on $\mathfrak{g}'$. This means that if zero is
not an eigenvalue of $\partial_{y}$ in $\mathfrak{g}'$, then $[\partial_{y},\, X]\,=\, [\partial_{y},\, Y]$
for some $Y\, \in\, \mathfrak{g}'$. Hence, $X-Y$ is in the centralizer of $\partial_{y}$ in the space of all
vector fields of the form $(a x + H(y)) \partial_{x} + b \partial_{y}$. Therefore, we have
$$
X-Y\,=\, C_1x\partial_{x} +c_2\partial_{y}+c_3\partial_{x}\, .
$$
Hence, $\mathfrak{g}/\langle \mathfrak{g}',\, \partial_{y}\rangle$ is of dimension at most two.

Now, $\mathfrak{g}/\langle \mathfrak{g}',\, \partial_{y}\rangle$ cannot be two dimensional, otherwise
$[x\partial_{x},\, \partial_{x}]\,=\, -\partial_{x}$ will be $\mathfrak{g}'$ and zero will be an
eigenvalue of $\partial_{y}$ in $\mathfrak{g}'$. This proves parts (1) and (2) in the corollary.

For part (3) we note that if zero is an eigenvalue of $\partial_{y}$ in $\mathfrak{g}'$ of
multiplicity $N$, then all vector fields of the type $P(y)\partial_{x}$, where $\text{degree}(P)\, \leq\, N$,
are mapped by $\partial_{y}$ to the generalized eigenspace for $\partial_{y}$ in $\mathfrak{g}'$.
The rest of the arguments are similar to those for parts (1) and (2).

For the remaining parts we need to compute the centralizer of $x\partial_{x}+\partial_{y}$
in the space of all vector fields of the form $(a x + H(y)) \partial_{x} + b \partial_{y}$. This centralizer
is spanned by $x\partial_{x},\, \partial_{y},\, e^y\partial_{x}$. The rest is similar to the cases
already considered.
\end{proof}

\begin{remark}\label{re6.4}
The results for the real case follow from the real version of Lie's theorem on solvable real algebras; namely such an 
algebra either fixes a line or a plane in any representation.
The reason is that if the complex line $\langle u+\sqrt{-1}v\rangle$ is invariant under action of the complexification of real solvable Lie
algebra $\mathfrak g$, then the subspace $\langle u,\,v\rangle$ is invariant under the action of $\mathfrak g$. (See \cite{AB}.)
In the case under consideration, for the representation 
of $\mathfrak g$ in ${\mathfrak g}'$ and ${\mathfrak g}'$ abelian of rank 1, one can choose coordinates so that
this line is $\langle\partial_x\rangle$ or the plane $\langle\partial_x,\, y \partial_x\rangle$.

This gives the classification in the real case, except that for complex eigenvalues, the generalized eigenvectors in 
the complexification of $\mathfrak g$ of the operators in Theorem \ref{prop6.5}
are to be replaced by the corresponding real form of
$({\mathfrak g}')^{\mathbb C}(\lambda) + ({\mathfrak g}')^{\mathbb C}(\overline{\lambda})$.
\end{remark}

\section*{Acknowledgements}

IB is supported by a J. C. Bose Fellowship. FM thanks the NRF of South Africa and the Abdus Salam School of 
Mathematical Sciences, Pakistan, for support in collaboration.


\begin{thebibliography}{ZZZZZ}

\bibitem{AB} Azad H and Biswas I, A note on real algebraic groups. {\it Forum Math.}
{\bf 28} (2016), 539--543

\bibitem{ABM} Azad H, Biswas I and Mahomed F M, Equality of the algebraic and geometric ranks of
Cartan subalgebras and applications to linearization of a system of ordinary differential
equations. {\it Internat. J. Math.} {\bf 28}, no. 11, (2017).

\bibitem{GKO} Gonz\'alez-L\'opez A, Kamran N and Olver P J, Lie algebras of vector fields in
the real plane. {\it Proc. London Math. Soc.} {\bf 64} (1992), 339--368.

\bibitem{Li1} Lie S, Theorie der Transformationsgruppen I. {\it Math. Ann.} {\bf 16} (1880), 441--528.

\bibitem{Li2} Lie S, {\it Theory of transformation groups. I.
General properties of continuous transformation groups}, A contemporary approach and translation. With
the collaboration of Friedrich Engel. Edited and translated from the German and with a foreword
by Jo\"el Merker. Springer, Heidelberg, 2015.

\bibitem{SMAM} Shah S W, Mahomed F M, Azad H and Mustafa M T,
Complete classification of scalar fourth-order ordinary differential equations and linearizing algorithms.
{\it Dynamic Systems and Applications} (to appear).

\end{thebibliography}
\end{document}